\newcommand{\RR}{\mathbb{R}}
\newcommand{\CC}{\mathbb{C}}
\newcommand{\NN}{\mathbb{N}}
\newcommand{\ZZ}{\mathbb{Z}}
\newcommand{\OO}{\mathcal{O}}
\newtheorem{Tw}{Theorem}
\newtheorem{Le}{Lemma}
\newtheorem{Stw}{Proposition}
\newtheorem{Wn}{Corollary}
\theoremstyle{remark}
\newtheorem{Uw}{Remark}
\theoremstyle{definition}
\newtheorem{Df}{Definition}
\begin{document} 
\title[A Dirichlet type problem]{A Dirichlet type problem for complex polyharmonic functions}
\keywords{Dirichlet problem, polyharmonic functions, Almansi expansion, Lie ball}
\subjclass[2010]{31B30, 32A25, 32A50, 35J40}
\author{Hubert Grzebu{\l}a}
\address{Faculty of Mathematics and Natural Sciences,
College of Science\\
Cardinal Stefan Wyszy\'nski University\\
W\'oycickiego 1/3,
01-938 Warszawa, Poland}
\email{h.grzebula@student.uksw.edu.pl}
\author{S{\l}awomir Michalik}
\address{Faculty of Mathematics and Natural Sciences,
College of Science\\
Cardinal Stefan Wyszy\'nski University\\
W\'oycickiego 1/3,
01-938 Warszawa, Poland}
\email{s.michalik@uksw.edu.pl}
\urladdr{\url{http://www.impan.pl/~slawek}}

\begin{abstract}
We extend holomorphically polyharmonic functions on a real ball to  a complex set being the union of rotated balls.
We solve a Dirichlet type problem for complex polyharmonic functions with the boundary condition given on the union of rotated spheres.
 \end{abstract}
\maketitle  
\section{Introduction}
Boundary value problems for polyharmonic functions have recently been extensively studied (see \cite{G-G-S} and the references given there).
In these problems the boundary conditions are expressed by linear differential operators (see \cite[Section 2.3]{G-G-S}) such as
the normal derivatives in the case of Dirichlet boundary conditions or the iterated Laplacians in the case of Navier boundary conditions.

Nevertheless, we study a quite different Dirichlet type problem, where the boundary conditions are given only in the terms of the solution.
Here the conditions on the normal derivatives or on the iterated Laplacians of the solution are replaced by the conditions given in terms of
values of the solution on rotated spheres.  

More precisely, our problem is to find a polyharmonic function $u$ of order $ p $ on the union of the rotated unit balls  
$\bigcup_{k=0}^{p-1}e^\frac{k\pi i}{p}B$, $B\subseteq \RR^n$, such that $ u $ is continuous in $\bigcup_{k=0}^{p-1}e^\frac{k\pi i}{p}\overline{B} $ 
and satisfies the boundary conditions
$$
u(x)=f_k(x)\quad \textrm{for}\quad  x\in e^\frac{k\pi i}{p}\partial B,
$$  
where $ p\in \NN $ and the functions $f_k$ are given and continuous in $e^\frac{k\pi i}{p}\partial B$, for $k=0,1,\dots,p-1$.
Throughout this paper, all such problems will be concisely written as
\begin{equation}
\label{eq:1}
   \begin{cases}
   \Delta^pu(x)=0, & \text{} x\in \bigcup_{k=0}^{p-1}e^\frac{k\pi i}{p}B \\
   u(x)=f_k(x), & \mbox{} x\in e^\frac{k\pi i}{p}\partial B,\ k=0,1,...,p-1.
   \end{cases}
   \end{equation}
   
We see that the condition of polyharmonicity is put on the union of the rotated unit balls.
But in view of Lemma \ref{le:1}, every polyharmonic function on the unit ball $B$ can be polyharmonically extended to the union of the rotated unit
balls. Hence, it is sufficient to assume in (\ref{eq:1}) that $u$ is a polyharmonic function on $B$ and its holomorphic extension is
continuous in $\bigcup_{k=0}^{p-1}e^\frac{k\pi i}{p}\overline{B}$.

The motivation for the study of this problem comes from the Pizzetti-type formula for the operator $\Delta^p$, which is a generalisation
of the classical Pizzetti formula (see \cite{L0,L1}), given
by the second author \cite[Corollary 3]{S-M}. By this formula,
the integral mean of $u$ over the rotated spheres $x+\bigcup_{k=0}^{p-1}e^\frac{k\pi i}{p}\partial B(0,r)$ given by
$$
M_{\Delta^p}(u;x,r):=\frac{1}{p\omega_n}\sum_{k=0}^{p-1}\int\limits_{\partial B}u(x+e^{\frac{k\pi i}{p}}r\zeta)\,dS(\zeta)
$$
has the expansion
$$
 M_{\Delta^p}(u;x,r)
 =\sum_{j=0}^{\infty}\frac{\Delta^{pj}u(x)}{4^{pj}(n/2)_{pj}(pj)!}r^{2pj},
 $$
 where $(a)_k:=a(a+1)\cdots(a+k-1)$, for $k\in\NN$, is the Pochhammer symbol and $\omega_n $ denotes the area of the unit sphere in $ {\RR}^n $.
See also \cite[Theorem 1]{L}.

Hence, the mean value property $u(x)=M_{\Delta^p}(u;x,r)$ holds (see \cite[Remark 6]{S-M}) for every
 polyharmonic function $u$ of order $p$ on the closed rotated balls $x+\bigcup_{k=0}^{p-1}e^\frac{k\pi i}{p}\overline{B}(0,r)$.

In particular, it means that the value $u(0)$ of the polyharmonic function $u$ is uniquely determined by the boundary values on the rotated unit
spheres $\bigcup_{k=0}^{p-1}e^\frac{k\pi i}{p}\partial B$. Here the question arises how to determine the values of $u$ on
the whole union of the rotated balls $\bigcup_{k=0}^{p-1}e^\frac{k\pi i}{p}B$ in terms of the boundary values
or, in other words, how to solve (\ref{eq:1}).

The answer is given in the main theorem (Theorem \ref{th:1}), which says that the unique solution of (\ref{eq:1}) is given by the sum
of Poisson type integrals
\begin{equation*}
u(x) =\frac{1}{p\omega_n}\sum_{k=0}^{p-1} 
\int\limits_{\partial B} \frac{1-|x|^{2p}}{|e^\frac{-k\pi i}{p}x-\zeta|^n }f_k(e^\frac{k\pi i}{p}\zeta) \,dS(\zeta).
\end{equation*}

The paper is organised as follows. In the next section we write several remarks about the notation used throughout the paper.

In Section 3 we examine the holomorphic extension of polyharmonic functions. We prove that every polyharmonic function on the unit ball
$ B $ can be extended to  the set $ e^{i\varphi}B$, for each $\varphi \in \RR $ (Lemma \ref{le:2}).
Here, we use the Almansi expansion (Proposition \ref{pr:1}) and the Poisson integral. We recall also the definitions of the Lie ball
and the Lie sphere
and Siciak's theorem (Proposition \ref{pr:2})  about the holomorphic extension of harmonic functions on the ball $B$ to the Lie ball $LB$.

In Section 4 we prove the main theorem (Theorem \ref{th:1}) about the existence and uniqueness of the solution of
the main problem (\ref{eq:1}).
First we find a form of polyharmonic functions on the ball, which is suitable for solving this problem (Lemma \ref{le:2}).

In Section 5 we note two corollaries of the main theorem. Firstly, we solve the Dirichlet problem in the general case,
i.e. the problem for the ball $ B(a,r)$ with boundary conditions put on the set $ a+\bigcup_{k=0}^{p-1} e^\frac{k\pi i}{p}\partial B(0,r) $
(Corollary \ref{th:2}). Next, we solve the exterior problem corresponding to the interior problem (\ref{eq:1}) (Corollary \ref{th:3}).
Here polyharmonicity is required on the set $\RR^n\setminus B$ with the same boundary conditions as in the problem (\ref{eq:1}).

In the final section we indicate some further applications of the main result, which will be studied in the subsequent paper

\section{Preliminaries}
Before solving the problem (\ref{eq:1}) we recall some basic notations.

We define the real norm
\begin{equation*}
|x|= \left( \sum_{j=1}^nx_j^2 \right)^{1/2}\quad\textrm{for}\quad x=(x_1,\dots,x_n)\in \RR^n
\end{equation*}
and the complex norm
\begin{equation*}||z||= \left(\sum_{j=1}^n|z_j|_{\CC}^2 \right)^{1/2} \quad\textrm{for}\quad z=(z_1,\dots,z_n)\in \CC^n
\end{equation*} 
with $ |z_j|_{\CC}^2=z_j \overline{z}_j$. We will also use the complex extension of the real norm for complex vectors: 
\begin{gather*}
|z|=\left( \sum_{j=1}^nz_j^2 \right)^{1/2}\quad\textrm{for}\quad z=(z_1,\dots,z_n)\in\CC^n.
\end{gather*}
By a square root in the above formula we mean the principal square root, where a branch cut is taken along the nonpositive real axis.
Obviously the function $|\cdot|$ is not a norm in $\CC^n$, because it is complex valued and hence the function $|z-w|$ is not a metric on $\CC^n$.

We will  consider mainly complex vectors of the form $ z=e^{i\varphi}x $, that is real vectors $x\in \RR^n $ rotated by the angle
$ \varphi \in (-\pi,\pi]$. In this case we have 
$|e^{i\varphi}x|=|e^{i\varphi}|_{\RR}|x|$,
where $|e^{i\varphi}|_{\RR}$ denotes the principal value of the square root $\sqrt{e^{2i\varphi}}$, i.e.
\begin{equation*}
|e^{i\varphi}|_{\RR}=\sqrt{e^{2i\varphi}}=\left\{
\begin{array}{lll}
 e^{i(\varphi+\pi)}=-e^{i\varphi} & \textrm{for} & \varphi \in (-\pi,-\frac{\pi}{2}]\\
 e^{i\varphi} & \textrm{for} & \varphi \in (-\frac{\pi}{2},\frac{\pi}{2}]\\
 e^{i(\varphi-\pi)}=-e^{i\varphi} & \textrm{for} & \varphi \in (\frac{\pi}{2},\pi].
\end{array}
\right. 
\end{equation*}
For the set $G\subseteq\RR^n$ and the angle $\varphi\in\RR$ we will consider the rotated set defined by
$$e^{i\varphi}G:=\{e^{i\varphi}x:x\in G\}.$$
Let $G$  be an open set in $ \RR^n $. We denote by $\mathcal{A}(G)$ the space of analytic functions on $ G $.
Similarly we say that $f\in \mathcal{A}(e^{i\varphi}G)$ if and only if $f_{\varphi}(x):=f(e^{i\varphi}x)\in \mathcal{A}(G)$. We call
$$\mathcal{A}_{\Delta}(G):=\{f\in \mathcal{A}(G):\Delta_x f=0 \}$$ 
the space of harmonic functions on $ G $, where $\Delta_x $ denotes the Laplacian in $\RR^n$.
Analogously we define the family $\mathcal{A}_{\Delta}(e^{i\varphi}G)$ of harmonic functions on $e^{i\varphi}G$. Observe that
$f\in \mathcal{A}_{\Delta}(e^{i\varphi}G)$ if and only if $f_{\varphi}\in\mathcal{A}_{\Delta}(G)$. 
Similarly, replacing the Laplace operator $\Delta_x$ by its $p$-th iteration $\Delta_x^p$ in the above definitions,
we introduce the spaces of polyharmonic functions of degree $p$, that is $ \mathcal{A}_{\Delta^p}(G)$
and $\mathcal{A}_{\Delta^p}(e^{i\varphi}G)$.

Now let $U$ be an open set in $\CC^n$. We denote by $\mathcal{O}(U)$ the space of holomorphic functions on $U$. We call 
$$\mathcal{O}_{\Delta}(U)=\{f\in \OO(U)\colon \Delta_z f=0\}$$ 
the space of complex harmonic functions on $U$, where $\Delta_z$ denotes the complex Laplace operator in $\CC^n$
$$\Delta_z=\frac{\partial^2}{\partial z_1^2}+\frac{\partial^2}{\partial z_2^2}+\cdots+\frac{\partial^2}{\partial z_n^2}.$$
 
Similarly we introduce the space $\OO_{\Delta^p}(U)$ of polyharmonic complex functions on $U$.

Let us note also that $f\in \mathcal{A}(G)$ (resp. $f\in\mathcal{A}_{\Delta}(G)$, $ f\in\mathcal{A}_{\Delta^p}(G)$) if and only if there is
a complex open neighbourhood $U$ of $G$ (i.e. $U$ is open in $\CC^n$ and $G\subseteq U$) such that $f$ extends to the holomorphic function
$\widetilde{f}\in \mathcal{O}(U)$ (resp. $\widetilde{f}\in\mathcal{O}_{\Delta}(U)$, $\widetilde{f}\in\mathcal{U}_{\Delta^p}(U)$).
The same properties hold if we replace the real set $G$ by the rotated one $e^{i\varphi}G$. Such function $\widetilde{f}$ will be called
the \emph{complex holomorphic} (resp. \emph{harmonic}, \emph{polyharmonic}) \emph{extension of} $f$.

Throughout the paper $ B(a,r) $ denotes the real open ball with the centre at  $a\in {\RR}^n$ and radius $ r>0 $.
We denote the unit ball $ B(0,1) $ simply by $B$.

\section{Analytic extension}
In this section we will discuss the analytic extension of polyharmonic functions.
First we recall the finite Almansi expansion which will be extensively used in our study of polyharmonic functions.

\begin{Stw}[Finite Almansi expansion, {\cite[Proposition 1.2, Proposition 1.3]{A-C-L}}] 
\label{pr:1}
If $u\in \mathcal{A}_{\Delta^p}(B)$, then there exist unique functions  $h_0,h_1,...,h_{p-1}\in \mathcal{A}_{\Delta}(B)$ such that
\begin{gather}
\label{eq:2}
u(x)=h_0(x)+|x|^2h_1(x)+...+|x|^{2(p-1)}h_{p-1}(x) \quad \textrm{for} \quad x\in B.
\end{gather}
Moreover, any function $u$ of the form (\ref{eq:2}) is a polyharmonic function of order $p$ on $B$.
\end{Stw}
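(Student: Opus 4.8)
The plan is to base the whole proof on one elementary identity. For a harmonic function $h$ and an integer $k\ge 1$ the product rule, together with $\nabla|x|^{2k}=2k|x|^{2k-2}x$ and $\Delta|x|^{2k}=2k(2k+n-2)|x|^{2k-2}$, gives
\begin{equation}
\label{eq:Lk}
\Delta\bigl(|x|^{2k}h\bigr)=|x|^{2(k-1)}L_k h,\qquad L_k h:=2k(2k+n-2)h+4k\,(x\cdot\nabla h).
\end{equation}
The decisive point I would establish first is that $L_k$ maps $\mathcal{A}_{\Delta}(B)$ into itself: since $\Delta(x\cdot\nabla h)=2\Delta h+(x\cdot\nabla)\Delta h$ vanishes for harmonic $h$, the Euler field $x\cdot\nabla$ preserves harmonicity, and hence so does $L_k$. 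Expanding any $h\in\mathcal{A}_{\Delta}(B)$ into its solid spherical harmonics $h=\sum_{d\ge0}h_d$ (homogeneous harmonic polynomials of degree $d$), Euler's relation $x\cdot\nabla h_d=d\,h_d$ shows that $L_k$ acts on $h_d$ as multiplication by $\lambda_{k,d}=2k(2k+n-2+2d)$, which is strictly positive for $k\ge1$. Thus $L_k$ is a bijection of $\mathcal{A}_{\Delta}(B)$ whose inverse divides the $d$-th component by $\lambda_{k,d}$.

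Iterating \eqref{eq:Lk} $j$ times reduces $|x|^{2j}h$ to the harmonic function $L_1\cdots L_j h$, so $\Delta^{j+1}(|x|^{2j}h)=0$; applying $\Delta^{p-1-j}$ once more yields $\Delta^{p}(|x|^{2j}h)=0$ for every $j\le p-1$. By linearity this settles the \emph{moreover} part: any function of the form \eqref{eq:2} lies in $\mathcal{A}_{\Delta^p}(B)$. For existence of the decomposition I would argue by induction on $p$, the case $p=1$ being $h_0=u$. Given $u\in\mathcal{A}_{\Delta^p}(B)$, the function $\Delta u$ lies in $\mathcal{A}_{\Delta^{p-1}}(B)$, so the inductive hypothesis provides harmonic $g_0,\dots,g_{p-2}$ with $\Delta u=\sum_{j=0}^{p-2}|x|^{2j}g_j$. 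Setting $h_j:=L_j^{-1}g_{j-1}$ for $j=1,\dots,p-1$ and $v:=\sum_{j=1}^{p-1}|x|^{2j}h_j$, identity \eqref{eq:Lk} gives $\Delta v=\sum_{j=1}^{p-1}|x|^{2(j-1)}g_{j-1}=\Delta u$, whence $h_0:=u-v$ is harmonic and $u=\sum_{j=0}^{p-1}|x|^{2j}h_j$.

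For uniqueness, suppose $\sum_{j=0}^{p-1}|x|^{2j}h_j\equiv0$ with all $h_j$ harmonic. Since $|x|^{2j}h_j\in\mathcal{A}_{\Delta^{j+1}}(B)$, applying $\Delta^{p-1}$ kills every term with $j\le p-2$ and, by the iteration above, turns the top term into $L_1\cdots L_{p-1}h_{p-1}$; as each $L_k$ is injective on $\mathcal{A}_{\Delta}(B)$, this forces $h_{p-1}=0$, and a downward induction gives $h_j=0$ for all $j$. The main obstacle I anticipate is not algebraic but analytic: I must check that the formally defined $h_j=L_j^{-1}g_{j-1}$ is a genuine real-analytic harmonic function on all of $B$. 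This should follow because the inverse multipliers satisfy $0<\lambda_{j,d}^{-1}\le\lambda_{j,0}^{-1}$, so the solid-harmonic series defining $h_j$ converges locally uniformly on $B$ wherever that of $g_{j-1}$ does; making this bookkeeping precise, via uniform Cauchy estimates on the homogeneous components, is the technical heart of the argument.
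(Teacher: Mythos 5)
Your proof is correct, but note that the paper itself gives no proof of this proposition: it is quoted directly from Aronszajn--Creese--Lipkin \cite[Propositions 1.2, 1.3]{A-C-L}, so your argument is necessarily a different, self-contained route. Your three ingredients --- the commutation identity $\Delta(|x|^{2k}h)=|x|^{2(k-1)}L_kh$, the fact that $L_k$ preserves harmonicity, and its diagonal action $L_kh_d=2k(2k+n-2+2d)\,h_d$ on solid spherical harmonics --- are all verified correctly, and both the induction for existence and the $\Delta^{p-1}$-plus-injectivity argument for uniqueness go through. The classical proof behind the citation constructs the harmonic components by explicit integrals along rays, which is why it works on any domain star-shaped about the origin, whereas your diagonalization on spherical harmonics is tied to the ball; on the ball the two are equally rigorous. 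One remark that removes the ``technical heart'' you worry about at the end: no Cauchy estimates on homogeneous components are needed, because $L_k^{-1}$ has a closed form. Writing $\alpha=(2k+n-2)/2>0$, the function
\begin{equation*}
h(x)=\frac{1}{4k}\int_0^1 t^{\alpha-1}g(tx)\,dt
\end{equation*}
satisfies $x\cdot\nabla h=\frac{1}{4k}g-\alpha h$ (integrate by parts in $t$, using $\frac{d}{dt}\,g(tx)=x\cdot(\nabla g)(tx)$), hence $L_kh=g$; moreover $h$ is harmonic whenever $g$ is, since $\Delta_x\,g(tx)=t^2(\Delta g)(tx)=0$ and one may differentiate under the integral, and injectivity of $L_k$ follows just as cheaply, since $L_kh=0$ forces $r^{\alpha}h(r\omega)$ to be constant in $r$, hence zero. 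This ray integral is essentially the device of the classical Almansi argument, and with it your induction closes with no convergence bookkeeping at all.
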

 
Now we can prove the lemma about the holomorphic extension. 

\begin{Le}
\label{le:1}
Let $\varphi\in\RR$ and $u\in \mathcal{A}_{\Delta^p}(B)$. Then 
the function $u$ has a holomorphic extension to the set $\{z\in\CC^n\colon\ z=e^{i\psi}x,\ \psi\in\RR,\ x\in B\}$,
whose restriction $u_{\varphi} $ to $e^{i\varphi}B$ is polyharmonic of order $p$, i.e. $u_{\varphi}\in \mathcal{A}_{\Delta^p}(e^{i\varphi}B)$.
\end{Le}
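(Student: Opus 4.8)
The plan is to reduce to the harmonic case via the Almansi expansion and then extend harmonic functions by complexifying the Poisson integral. First I would apply Proposition \ref{pr:1} to write
\[
u(x)=\sum_{j=0}^{p-1}|x|^{2j}h_j(x),\qquad h_j\in\mathcal{A}_{\Delta}(B).
\]
Each factor $|x|^{2j}$ is the restriction to $\RR^n$ of the entire polynomial $(z_1^2+\dots+z_n^2)^j$, so the whole problem reduces to producing a holomorphic, complex harmonic extension $\widetilde{h}_j$ of each $h_j$ to a neighbourhood of the set $\{e^{i\psi}x:\psi\in\RR,\ x\in B\}$; the extension of $u$ is then $\widetilde{u}(z)=\sum_{j=0}^{p-1}|z|^{2j}\widetilde{h}_j(z)$ with $|z|^2=\sum_l z_l^2$.

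To extend a harmonic $h_j$, I would fix $\rho\in(0,1)$, use the Poisson integral representation of $h_j$ on $B(0,\rho)$, and replace the real argument by a complex $z$:
\[
\widetilde{h}_j(z)=\frac{\rho^{n-2}}{\omega_n}\int_{\partial B}\frac{\rho^2-|z|^2}{|z-\rho\zeta|^n}\,h_j(\rho\zeta)\,dS(\zeta),
\]
where $|z-\rho\zeta|^n=(\sum_l(z_l-\rho\zeta_l)^2)^{n/2}$ is taken with the principal branch. The numerator is entire, so holomorphy of the integrand — and hence, by differentiation under the integral sign, of $\widetilde{h}_j$ — hinges on the denominator staying off the cut $(-\infty,0]$. \emph{This is the main obstacle.} I would verify it exactly along the rotated balls: for $z=e^{i\psi}x$ with $|x|=t<\rho$ and $s:=\langle x,\zeta\rangle\in[-t,t]$ one computes
\[
|z-\rho\zeta|^2=\rho^2-2\rho s\,e^{i\psi}+t^2e^{2i\psi}=:Q(e^{i\psi}).
\]
Viewing $Q$ as a quadratic in $w=e^{i\psi}$, its roots have modulus $\rho/t>1$, so $Q$ never vanishes on $|w|=1$; moreover, checking the two cases in which $\operatorname{Im}Q=0$ (namely $\sin\psi=0$ and $\cos\psi=\rho s/t^2$) gives in each case $\operatorname{Re}Q\ge(\rho-t)^2>0$ or $\operatorname{Re}Q=\rho^2-t^2>0$. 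Hence $Q(e^{i\psi})\in\CC\setminus(-\infty,0]$, the kernel is holomorphic near every such $z$, and letting $\rho\uparrow1$ (the pieces agreeing by the identity theorem) yields a holomorphic extension on a neighbourhood of $\{e^{i\psi}x:\psi\in\RR,\ x\in B\}$.

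That $\widetilde{h}_j$ is complex harmonic follows because the complexified kernel is the holomorphic continuation of the Poisson kernel, which is harmonic in $x$: the function $\Delta_z$ applied to the kernel is holomorphic in $z$ and vanishes on $\RR^n$, so by the identity theorem it vanishes identically, whence $\Delta_z\widetilde{h}_j=0$ by differentiating under the integral. Finally, for the polyharmonicity of the restriction, I would compute directly, using $|e^{i\varphi}x|^2=e^{2i\varphi}|x|^2$,
\[
\widetilde{u}(e^{i\varphi}x)=\sum_{j=0}^{p-1}e^{2ij\varphi}|x|^{2j}\,\widetilde{h}_j(e^{i\varphi}x).
\]
Since $\Delta_x[\widetilde{h}_j(e^{i\varphi}x)]=e^{2i\varphi}(\Delta_z\widetilde{h}_j)(e^{i\varphi}x)=0$, each $x\mapsto e^{2ij\varphi}\widetilde{h}_j(e^{i\varphi}x)$ is harmonic on $B$, so the displayed formula is exactly an Almansi decomposition. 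By the converse part of Proposition \ref{pr:1}, $x\mapsto\widetilde{u}(e^{i\varphi}x)$ is polyharmonic of order $p$ on $B$, which by definition means $u_\varphi=\widetilde{u}|_{e^{i\varphi}B}\in\mathcal{A}_{\Delta^p}(e^{i\varphi}B)$, as required. I expect the branch-cut estimate to be the only genuinely delicate point; the remaining steps are routine.
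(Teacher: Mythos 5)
Your proof is correct and takes essentially the same route as the paper's: reduction to the harmonic case via the Almansi expansion (Proposition \ref{pr:1}) combined with holomorphic continuation of the Poisson integral, the crux in both arguments being that the complexified denominator causes no trouble for $z$ in the rotated balls. The minor differences are cosmetic or in your favour: the paper deduces (poly)harmonicity of the extension by uniqueness of analytic continuation of $\Delta u$ and $\Delta^p u$ rather than by your Almansi-converse argument, and it verifies only that $\sum_{l}\left(e^{i\varphi}x_l-\zeta_l\right)^2$ does not vanish, whereas your additional check that this quantity avoids the branch cut $(-\infty,0]$ is exactly what is needed for the principal power $n/2$ when $n$ is odd.
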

\begin{proof}
First, we assume that $u\in \mathcal{A}_{\Delta}(B)$. Fix $r \in (0,1)$ and let $u_r(x):=u(rx)$ for $x\in B$.
Observe that $u_r$ is harmonic on $B$ and
continuous on its closure $\overline{B}$. Then, by the Poisson formula (\cite[Theorem 1.17]{A-B-R}), for every $x\in B$ we have
\begin{equation}
\label{eq:Poisson}
u_r(x)=\frac{1}{\omega_n}\int\limits _{\partial B} \frac{1-|x|^2}{\vert x-\zeta \vert^n}u(r\zeta)\,dS(\zeta),
\end{equation}
where $\omega_n=\frac{2\pi^{\frac{n}{2}}}{\Gamma(\frac{n}{2})}$ denotes the area of the unit sphere $\partial B$ in $\RR^n$ and
$dS(\zeta)$ is the surface measure on $\partial B$.

Let us denote by $u_{r,\varphi}(x)$ the function defined by the right hand side of (\ref{eq:Poisson}) in the case when $x\in e^{i\varphi}B$.
We will show that it is a well-defined analytic continuation of $u_r$ on the set $e^{i\varphi}B$ and
$u_{r,\varphi}\in\mathcal{A}_{\Delta}(e^{i\varphi}B)$. To this end let us
consider the auxiliary function $v_r$ on $B$ given by
 $$v_r(x):=u_{r,\varphi}(e^{i\varphi} x)=\frac{1}{\omega_n}\int\limits _{\partial B}
\frac{1-e^{2i\varphi}|x|^2}{\vert e^{i\varphi}x-\zeta \vert^n}u(r\zeta)\,dS(\zeta).$$
We will show that the function $v_r$ is well defined and real-analytic in $B$. For this it is sufficient to prove that there exist
$\varepsilon>0$ such that
$$
\inf_{\zeta\in\partial B}{\rm dist}\left( |e^{i\varphi}x-\zeta|^n,0 \right) \geq \varepsilon.
$$
By the compactness of $\partial B$ it is equivalent to the condition
\begin{equation}
\label{eq:compactness}
\vert e^{i\varphi}x-\zeta \vert^n\neq 0 \quad \textrm{for} \quad \zeta \in \partial B.
\end{equation}
Observe that, since $|\cdot|$ is not a norm in $\CC^n$, it is not sufficient to show that $e^{i\varphi}x\neq\zeta$.
So suppose, contrary to (\ref{eq:compactness}), that
$\vert e^{i\varphi}x-\zeta \vert^n = 0$ for some $ \zeta \in \partial B $. 
Then
$$ \sum_{k=1}^n \left( e^{i\varphi}x_k-\zeta_k \right) ^2=0$$
or equivalently
$$e^{i\varphi}|x|^2+e^{-i\varphi}-2x\zeta=0.$$
Since the imaginary part of the left hand side of the above equation has to be equal to $0$, we have 
$\vert x\vert =1$ for $\varphi\neq k\pi $, which is a contradiction. If $\varphi=k\pi $, then for $k=0,2,4,\dots$ we get  
$\vert e^{i\varphi}x-\zeta \vert =\vert x-\zeta \vert  \neq 0$ and for $k=1,3,5,\dots$ we get 
$\vert e^{i\varphi}x-\zeta \vert=\vert x+\zeta \vert \neq 0$, which is a contradiction too, as desired.  

From the above considerations we conclude that $u_{r,\varphi}\in\mathcal{A}(e^{i\varphi}B)$.
Moreover, if $\widetilde{u}_r$ and $\widetilde{u}_{r,\varphi}$ denote the complex holomorphic extensions of $u_r$ and $u_{r,\varphi}$ respectively,
then by (\ref{eq:Poisson})
there exist an open complex neighbourhood $U$ of the origin, such that $\widetilde{u}_r(z)=\widetilde{u}_{r,\varphi}(z)$ for $z\in U$.
It means that $u_{r}$ is analytically extended to $u_{r,\varphi}\in\mathcal{A}(e^{i\varphi}B)$ and also $u$ is analytically extended to
$u_{\varphi}\in\mathcal{A}(e^{i\varphi}B(0,r))$.  Hence also $\Delta u$ is analytically extended to
$\Delta u_{\varphi}\in\mathcal{A}(e^{i\varphi}B(0,r))$. On the other hand, since $\Delta u=0$ on $B$, by the uniqueness
of the analytic
continuation also $\Delta u_{\varphi}=0$ on $e^{i\varphi}B(0,r)$. So, we conclude that $ u_{\varphi}\in \mathcal{A}_{\Delta}(e^{i\varphi}B(0,r))$.
Since it holds for arbitrary $r\in(0,1)$, $u_{\varphi}\in \mathcal{A}_{\Delta}(e^{i\varphi}B)$ is the analytic extension of $u$.

If $u$ is polyharmonic on $B$, then by Proposition \ref{pr:1} and by the above considerations we conclude that $u_{\varphi}$ is also polyharmonic on
$e^{i\varphi}B$.
\end{proof}
 
Now we show that the analytic extension to the set $ e^{i\varphi}B $ follows also from Siciak's theorem. First we introduce the notion
of the Lie ball and sphere.
\begin{Df}
The sets
$$ LB(0,r):=\{z\in \mathbb{C}^n\colon L(z)<r \},$$
$$ LS(0,r):=\{e^{i\varphi }x\colon \varphi \in \mathbb{R},\ x\in \partial B(0,r) \},$$
where  
$$ L(z)=\sqrt{||z||^2+\sqrt{||z||^4-|z^2|_{\CC}^2}}, $$
with $z^2=z\cdot z$, are called the \emph{Lie ball} and the \emph{Lie sphere} of radius $ r $, respectively.

Throughout the paper we will denote the Lie ball $ LB(0,1)$ and the Lie sphere $LS(0,1)$ by $LB$ and $LS$, respectively.
\end{Df}

\begin{Uw}
 The Lie ball $LB(0,r)$ is E. Cartan's classical domain of type $4$ studied by Hua \cite{H}. In particular he proved in \cite{H} that
 the Lie sphere $LS(0,r)$
 is the Shilov boundary of the closure of the Lie ball $\overline{LB}(0,r)=\{z\in \mathbb{C}^n\colon L(z)\leq r \}$ (i.e. $LS(0,r)$ is the
 smallest closed subset of the boundary of $\overline{LB}(0,r)$, where every holomorphic function on $LB(0,1)$ and continuous on
 $\overline{LB}(0,r)$ reaches its maximum).
 \end{Uw}
 \begin{Uw}
 Since $e^{i\varphi }x=e^{i(\varphi+\pi)}(-x)$, we get 
 $$LS(0,r)\cong (\RR/2\pi\ZZ\times\partial B(0,r))/\sim,$$
 where $\sim$ is the equivalence relation defined by
 $$
 (\varphi, x)\sim (\varphi+\pi, -x)
 $$ 
 (see Hua \cite{H} or Morimoto \cite{M2}).
\end{Uw}

The following proposition holds.
\begin{Stw} [Siciak's theorem \cite{S}, see also {\cite[Theorem 1]{F-M}} and {\cite[Theorem 3.38]{M}}]
\label{pr:2}
The Lie ball  $ LB(0,r) $ is the harmonic hull of the real ball $B(0,r)$, that is, every harmonic function on $ B(0,r) $ is holomorphically extended to the Lie ball $ LB(0,r) $.
\end{Stw}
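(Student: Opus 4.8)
The plan is to complexify the Poisson integral and to recognise the Lie ball as exactly the region on which the complexified kernel stays holomorphic; this reuses the tools already employed in the proof of Lemma~\ref{le:1}. Fix $z\in\CC^n$ with $L(z)<r$ and choose $\rho$ with $L(z)<\rho<r$. Since $u$ is harmonic on $B(0,r)$, the rescaled function $u_\rho(\xi):=u(\rho\xi)$ is harmonic on $B$ and continuous on $\overline B$, so by the Poisson formula \eqref{eq:Poisson} one has $u(x)=\frac{1}{\omega_n}\int_{\partial B}\frac{1-|x/\rho|^2}{|x/\rho-\zeta|^n}u(\rho\zeta)\,dS(\zeta)$ for $x\in B(0,\rho)$. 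I would define the candidate extension
\[
\widetilde u(z):=\frac{1}{\omega_n}\int_{\partial B}\frac{1-|z/\rho|^2}{|z/\rho-\zeta|^n}\,u(\rho\zeta)\,dS(\zeta),
\]
where $|\cdot|$ is the complex extension of the real norm, and show that it is holomorphic in $z$ on $LB(0,\rho)$ and agrees with $u$ on the real ball.

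The heart of the matter is a lemma locating the degeneracies of the kernel: for $\eta\in\partial B(0,\rho)$ the factor $|z-\eta|^n$ is a nonvanishing holomorphic function of $z$ exactly when $|z-\eta|^2\neq0$. Writing $z=x+iy$ with $x,y\in\RR^n$, one computes $|z-\eta|^2=(|x-\eta|^2-|y|^2)+2i\langle x-\eta,y\rangle$, so the kernel vanishes if and only if $|x-\eta|=|y|$ and $x-\eta\perp y$. Writing such an $\eta$ as $\eta=x-w$ with $|w|=|y|$ and $w\perp y$ gives $\rho^2=|\eta|^2=|x|^2+|y|^2-2\langle x,w\rangle$. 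Minimising over admissible $w$ yields $\langle x,w\rangle\ge-|y|\,|x|\sin\theta$, where $\theta$ is the angle between $x$ and $y$, hence $\rho^2\le|x|^2+|y|^2+2|x||y|\sin\theta$. Using $||z||^4-|z^2|_{\CC}^2=4(|x|^2|y|^2-\langle x,y\rangle^2)$ one checks that the right-hand side equals $L(z)^2$, so the kernel can vanish on $\partial B(0,\rho)$ only if $L(z)\ge\rho$. Contrapositively, $L(z)<\rho$ forces $|z-\eta|^2\neq0$ for every $\eta\in\partial B(0,\rho)$; the tightness of this estimate is precisely what makes the Lie norm, rather than some smaller radius, appear.

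With the lemma in hand the conclusion is routine. For $z\in LB(0,\rho)$ the quantity $|z-\eta|^2$ is bounded away from $0$ uniformly in $\eta$ by compactness of $\partial B$, so $\widetilde u$ is holomorphic on $LB(0,\rho)$ (by Morera's theorem or by differentiation under the integral sign), and since $\widetilde u=u$ on $B(0,\rho)$ it is the holomorphic extension of $u$ there. Because $LB(0,r)=\bigcup_{\rho<r}LB(0,\rho)$ and analytic continuations coincide on overlaps, these local extensions paste to a single $\widetilde u\in\OO(LB(0,r))$, which proves the proposition.

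I expect two points to require real care. The genuine crux is the algebraic identification above: verifying that the extremal value of $\langle x,w\rangle$ reproduces exactly the term $2\sqrt{|x|^2|y|^2-\langle x,y\rangle^2}$ in $L(z)^2$, so that non-vanishing holds on the full Lie ball. The second is the branch of the square root in $|z-\eta|^n=(|z-\eta|^2)^{n/2}$ for odd $n$: one must select the holomorphic square root of the nonvanishing function $|z-\eta|^2$ that is positive on the real ball, which is possible because $LB(0,\rho)$ is simply connected (for even $n$ no branch is needed). An alternative route via the expansion of $u$ into solid spherical harmonics together with convergence estimates is possible, but the Poisson representation is more direct and dovetails with the techniques already used in the paper.
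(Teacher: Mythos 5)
Your proposal is correct, but the comparison here is asymmetric: the paper does not prove this proposition at all — it is quoted as Siciak's theorem, with the justification delegated to the citations \cite{S}, \cite{F-M} and \cite{M}. What you have written is therefore a self-contained substitute for that citation, and it is a natural one, because it runs the same complexified-Poisson-kernel argument that the paper uses in the proof of Lemma \ref{le:1}, only for arbitrary $z\in\CC^n$ rather than for the special points $z=e^{i\varphi}x$. Your key computation is right: writing $z=x+iy$, one has $\vert z-\eta\vert^2=(\vert x-\eta\vert^2-\vert y\vert^2)+2i\langle x-\eta,y\rangle$, so vanishing forces $\vert x-\eta\vert=\vert y\vert$ and $x-\eta\perp y$; the minimization $\langle x,w\rangle\geq -\vert x\vert\,\vert y\vert\sin\theta$ over $\{\vert w\vert=\vert y\vert,\ w\perp y\}$ is correct, and the identities $\Vert z\Vert^2=\vert x\vert^2+\vert y\vert^2$ and $\Vert z\Vert^4-\vert z\cdot z\vert_{\CC}^2=4\left(\vert x\vert^2\vert y\vert^2-\langle x,y\rangle^2\right)$ do give $L(z)^2=\vert x\vert^2+\vert y\vert^2+2\vert x\vert\,\vert y\vert\sin\theta$, hence non-vanishing of the kernel on all of $LB(0,\rho)$. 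The branch issue for odd $n$ is genuine and your resolution works: $LB(0,\rho)$ is star-shaped about the origin (by homogeneity of $L$), hence simply connected, so the square root of the non-vanishing kernel normalized to be positive on the real ball exists and is holomorphic. The exhaustion $LB(0,r)=\bigcup_{\rho<r}LB(0,\rho)$ and the gluing via the identity theorem on the totally real set $B(0,\rho)$ are standard. What your route buys is that the paper becomes self-contained, and in fact your argument subsumes Lemma \ref{le:1}, since $e^{i\varphi}B\subset LB$; what the citation buys is brevity and the full strength of Siciak's result. On that last point, one caveat: you prove the extension property (every harmonic function on $B(0,r)$ continues holomorphically to $LB(0,r)$), which is exactly what the paper's gloss ``that is, \dots'' asserts and all that the paper ever uses; the stronger reading of ``harmonic hull'' (maximality of $LB(0,r)$ among such domains) would additionally require exhibiting, for boundary points of the Lie ball, harmonic functions that do not continue past them. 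Your tightness remark contains the needed seed — when $L(z)=\rho$ the equality case of your minimization produces $\eta\in\partial B(0,\rho)$ with $\vert z-\eta\vert^2=0$, and the Poisson kernel with pole at $\eta$ is then a candidate non-extendable harmonic function — but you do not carry that step out, so strictly speaking the hull statement itself remains cited rather than proved.
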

 
Let us observe that the set $ e^{i\varphi}B $ is contained in the Lie ball $ LB(0,1) $. Indeed, if $ x\in B $ then
 $$ L(e^{i\varphi}x)=\sqrt{|x|^2+\sqrt{|x|^4-|e^{i\varphi}xe^{i\varphi}x|_{\CC}^2}}=|x|<1. $$ 
From the above and from Proposition \ref{pr:2} we conclude that each harmonic function on  $ B $ is holomorphically extended to $ e^{i\varphi}B$.

\section{The main result}
Now we will prove an auxiliary lemma, which we will use to solve our Dirichlet problem.
\begin{Le}
\label{le:2}
If $\Delta^p u=0$ on $B$, then there exist unique functions $g_0,g_1,\dots,g_{p-1}$, each harmonic on $B$ such that  
\begin{equation} 
\label{eq:3}
u(x)=\sum_{k=0}^{p-1}\frac{1-|x|^{2p}}{1-e^\frac{2k\pi i}{p}|x|^2}g_k(x)\quad \textrm{for}\quad x\in B.
\end{equation}
\end{Le}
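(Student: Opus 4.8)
The plan is to compare the proposed representation \eqref{eq:3} with the finite Almansi expansion of Proposition \ref{pr:1}. Writing $t=|x|^2$, the Almansi expansion presents $u$ as a polynomial $\sum_{j=0}^{p-1}t^j h_j(x)$ of degree $p-1$ in $t$ whose coefficients $h_j$ are harmonic on $B$ and uniquely determined. I would show that the right-hand side of \eqref{eq:3} is, despite its appearance, a polynomial in $t$ of exactly the same shape, so that the entire statement collapses to a single invertible linear change of coordinates between the coefficient tuples $(h_j)$ and $(g_k)$.

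The key algebraic observation is that $\omega_k:=e^{2k\pi i/p}$ is a $p$-th root of unity, so $(\omega_k t)^p=\omega_k^p t^p=t^p$, and the finite geometric series gives
\[
\frac{1-t^p}{1-\omega_k t}=\frac{1-(\omega_k t)^p}{1-\omega_k t}=\sum_{j=0}^{p-1}\omega_k^j t^j .
\]
For $x\in B$ one has $t=|x|^2\in[0,1)$ and $|\omega_k t|=t<1$, so every denominator $1-\omega_k t$ is nonzero and \eqref{eq:3} is well defined. Substituting this identity into the right-hand side of \eqref{eq:3} and interchanging the two finite sums yields
\[
\sum_{k=0}^{p-1}\frac{1-t^p}{1-\omega_k t}\,g_k(x)=\sum_{j=0}^{p-1}t^j\Bigl(\sum_{k=0}^{p-1}\omega_k^j g_k(x)\Bigr),
\]
so, matching against the Almansi expansion, representation \eqref{eq:3} holds precisely when $h_j=\sum_{k=0}^{p-1}\omega_k^j g_k$ for $j=0,\dots,p-1$.

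It remains to invert this linear system. Its matrix $(\omega_k^j)_{j,k=0}^{p-1}$ is the Vandermonde matrix of the distinct $p$-th roots of unity, equivalently the discrete Fourier matrix, which is invertible with inverse $\tfrac1p(\omega_k^{-j})_{k,j}$. I would therefore define
\[
g_k(x):=\frac1p\sum_{j=0}^{p-1}\omega_k^{-j}h_j(x),\qquad k=0,\dots,p-1.
\]
Each $g_k$ is a finite linear combination, with constant complex coefficients, of the harmonic functions $h_0,\dots,h_{p-1}$ furnished by Proposition \ref{pr:1}, hence is itself harmonic on $B$. Substituting these $g_k$ back and using the orthogonality relation $\tfrac1p\sum_{k}\omega_k^{j-l}=\delta_{jl}$ recovers $\sum_k\omega_k^j g_k=h_j$, which establishes existence. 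Uniqueness follows from the uniqueness of the $h_j$ in the Almansi expansion together with the bijectivity of the map $(g_k)\mapsto(h_j)$.

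The proof carries no genuine analytic difficulty; the only point requiring care is recognizing that \eqref{eq:3}, though written as a sum of rational expressions, is in fact polynomial in $|x|^2$, after which the statement reduces to the invertibility of the discrete Fourier transform. The main step to get right is thus the roots-of-unity identity and the ensuing Fourier inversion, everything else being the preservation of harmonicity under constant-coefficient linear combinations.
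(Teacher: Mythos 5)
Your proof is correct and follows essentially the same route as the paper: expand each coefficient $\frac{1-|x|^{2p}}{1-e^{2k\pi i/p}|x|^2}$ as a geometric sum in $|x|^2$ via the roots-of-unity identity, match against the unique Almansi expansion of Proposition \ref{pr:1}, and invert the resulting Vandermonde (discrete Fourier) system to define the harmonic functions $g_k$. The only difference is cosmetic: you write down the explicit inverse $\tfrac1p(\omega_k^{-j})$, whereas the paper merely notes the Vandermonde determinant is nonzero.
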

\begin{proof}
Let $ u $ be a polyharmonic function of order $p$ on $ B $.
We will find uniquely determined harmonic functions $g_0,g_1,\dots,g_{p-1}$ on $B$, which satisfy (\ref{eq:3}).

Denote by $a_k=a_k(|x|^2)$ the coefficient in front of the function $g_k$ in (\ref{eq:3}) for $k=0,1,\dots,p-1$. It is easy to observe that
\begin{gather*}
a_k = 1+e^\frac{2k\pi i}{p}|x|^2+\dots+e^\frac{2k(p-1)\pi i}{p}|x|^{2(p-1)}\quad \textrm{for}\quad k=0,1,\dots,p-1.
\end{gather*}
If we put the above formula to (\ref{eq:3}), we obtain 
\begin{eqnarray*}
 u(x) & = & g_0(x)+g_1(x)+\dots+g_{p-1}(x) \\
 & + & |x|^2 \left[ g_0(x)+e^\frac{2\pi i}{p}g_1(x)+\dots+e^\frac{2(p-1)\pi i}{p}g_{p-1}(x)\right] \\
 & \dots & \\
 & + & |x|^{2(p-1)} \left[ g_0(x)+e^\frac{2(p-1)\pi i}{p}g_1(x)+\dots+e^\frac{2(p-1)^2\pi i}{p}g_{p-1}(x)\right] 
\end{eqnarray*}
for $x\in B$.

By Proposition \ref{pr:1} we know that there exist unique harmonic functions $h_k$ on $B$, $k=0,1,\dots,p-1$, such that formula (\ref{eq:2}) is valid.
Substituting the above equality to (\ref{eq:2}) and comparing corresponding coefficients standing by the same powers of $|x|^2$,
we get the following system of linear equations:
\begin{displaymath}
\left\{ \begin{array}{ll}
g_0(x)+g_1(x)+\dots+g_{p-1}(x) =h_0(x)   \\
g_0(x)+e^\frac{2\pi i}{p}g_1(x)+\dots+e^\frac{2(p-1)\pi i}{p}g_{p-1}(x) = h_1(x) \\
\dots\\
g_0(x)+e^\frac{2(p-1)\pi i}{p}g_1(x)+\dots+e^\frac{2(p-1)^2\pi i}{p}g_{p-1}(x) = h_{p-1}(x).
\end{array} \right.
\end{displaymath}

The fundamental matrix of the above system has the form 
$$A=\left[ e^{\frac{2kl\pi i}{p}}\right] _{k,l=0}^{p-1}.$$
Since $A$ is a Vandermonde matrix, we get (see \cite[p.~9]{M-T})
\begin{equation*}
\det A = \prod_{0\leq k< l\leq p-1}\left(e^\frac{2l\pi i}{p}-e^\frac{2k\pi i}{p} \right)         
\end{equation*}
Observe that $e^\frac{k\pi i}{p}\neq e^\frac{l\pi i}{p}$ for each integers $k,l$ such that $k\neq l+mp$ for any $m\in \mathbb{N}$, hence
the matrix $A$ is nonsingular. Therefore
\begin{equation}
\label{eq:5}
G=A^{-1}H,
\end{equation}
where
$$
   G=\left[\begin{array}{ccccc}
         g_0(x) \\
         \vdots \\
         g_{p-1}(x)  
         \end{array}
   \right],\quad    H=\left[\begin{array}{ccccc}
         h_0(x) \\
         \vdots \\
         h_{p-1}(x)  
         \end{array}
   \right].
   $$

By (\ref{eq:5}) there exist functions  $g_k$, which are harmonic in $B$ and are uniquely determined by the functions $h_k$.  
\end{proof}
\begin{Uw}
\label{re:1}
By Lemma \ref{le:1}, it follows that the Dirichlet problem given by (\ref{eq:1}) can be written by
\begin{equation}
\label{eq:6}
   \begin{cases}
   \Delta^pu(x)=0, & \text{} x\in B \\
   u(x)=f_k(x), & \mbox{} x\in e^\frac{k\pi i}{p}\partial B,\ k=0,1,\dots,p-1.
   \end{cases}
   \end{equation}
\end{Uw}

Now we are ready to state the main result of the paper
\begin{Tw}[The main theorem]
\label{th:1}
The Dirichlet problem (\ref{eq:6}) has the unique solution given by
\begin{equation}
\label{eq:7}
u\left( x \right) =\frac{1}{p\omega_n}\sum_{k=0}^{p-1} 
\int\limits_{\partial B} \frac{1-|x|^{2p}}{|e^\frac{-k\pi i}{p}x-\zeta|^n }f_k(e^\frac{k\pi i}{p}\zeta) \,dS(\zeta).
\end{equation}
\end{Tw}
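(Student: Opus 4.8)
The plan is to reduce the problem to the harmonic components furnished by Lemma \ref{le:2} and then to recover those components by Poisson integrals on rotated balls. Any solution $u$ can be written, by Lemma \ref{le:2}, as $u(x)=\sum_{k=0}^{p-1}a_k g_k(x)$ with $g_k\in\mathcal{A}_\Delta(B)$ and
$$a_k=\frac{1-|x|^{2p}}{1-e^{2k\pi i/p}|x|^2}=\sum_{j=0}^{p-1}e^{2kj\pi i/p}|x|^{2j},$$
the finite geometric sum already computed in the proof of Lemma \ref{le:2}. I would work throughout with this polynomial form of $a_k$, since it stays regular precisely where the Poisson-type quotient becomes formally $0/0$.

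The crucial step is to evaluate the coefficients $a_k$ on the rotated spheres. For $x=e^{l\pi i/p}\zeta$ with $\zeta\in\partial B$ one has $|x|^2=e^{2l\pi i/p}$, so that
$$a_k=\sum_{j=0}^{p-1}e^{2(k+l)j\pi i/p}=\begin{cases}p,&k+l\equiv 0\pmod p,\\0,&\text{otherwise.}\end{cases}$$
This root-of-unity filtering is the heart of the argument: on $e^{l\pi i/p}\partial B$ all terms of the sum vanish except the single index $m:=(-l)\bmod p$, whose coefficient equals $p$. Hence the boundary condition $u=f_l$ on $e^{l\pi i/p}\partial B$ decouples into the $p$ separate conditions
$$g_m(e^{l\pi i/p}\zeta)=\tfrac1p f_l(e^{l\pi i/p}\zeta),\qquad \zeta\in\partial B,$$
one for each harmonic component $g_m$, with $l=(-m)\bmod p$.

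To recover $g_m$ I would use that, by Lemma \ref{le:1}, the map $\eta\mapsto g_m(e^{l\pi i/p}\eta)$ is harmonic on $B$ and continuous on $\overline{B}$ with boundary values $\tfrac1p f_l(e^{l\pi i/p}\zeta)$; the Poisson formula (\ref{eq:Poisson}) then represents it, and Lemma \ref{le:1} also guarantees that the representing integral is the holomorphic extension, so it may be evaluated at the rotated point $\eta=e^{-l\pi i/p}x$ for $x\in B$, where the kernel is non-singular by the estimate (\ref{eq:compactness}). Using $|e^{-l\pi i/p}x|^2=e^{-2l\pi i/p}|x|^2=e^{2m\pi i/p}|x|^2$ this gives
$$g_m(x)=\frac1{p\omega_n}\int_{\partial B}\frac{1-e^{2m\pi i/p}|x|^2}{|e^{-l\pi i/p}x-\zeta|^n}f_l(e^{l\pi i/p}\zeta)\,dS(\zeta).$$
Substituting this into $u=\sum_m a_m g_m$, the numerator $1-e^{2m\pi i/p}|x|^2$ cancels the denominator of $a_m$, leaving $\frac{1-|x|^{2p}}{p\omega_n}$ times the integral; re-indexing by the bijection $m\mapsto l=(-m)\bmod p$ of $\{0,\dots,p-1\}$ produces exactly (\ref{eq:7}).

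Read forwards, the same computation proves existence: defining the $g_m$ by the Poisson integrals above makes each $g_m$ harmonic on $B$ (Lemma \ref{le:1}), so $u=\sum_m a_m g_m$ is polyharmonic of order $p$ by Lemma \ref{le:2}, continuity up to the rotated spheres comes from the continuity of the $f_k$, and the coefficient evaluation verifies the boundary conditions. Uniqueness follows because the decomposition of Lemma \ref{le:2} is unique and a harmonic function on $B$ with vanishing boundary data is identically zero: for the homogeneous problem each $g_m\equiv0$, hence $u\equiv0$. I expect the coefficient evaluation of the second paragraph to be the main obstacle — recognising that the formally indeterminate kernel must be treated through the polynomial form of $a_k$, and that the $p$-th roots of unity annihilate every component but one on each rotated sphere; what remains is the bookkeeping of the index shift $m=(-l)\bmod p$ and the cancellation of the denominator.
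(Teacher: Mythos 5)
Your proposal is correct and follows essentially the same route as the paper's proof: the decomposition of Lemma \ref{le:2}, the root-of-unity filtering of the coefficients $a_k$ on the rotated spheres, the reduction to $p$ classical Dirichlet problems solved by Poisson integrals, analytic continuation via Lemma \ref{le:1} to evaluate at rotated points, and the cancellation plus re-indexing that yields (\ref{eq:7}). The only differences are notational (your index shift $m=(-l)\bmod p$ versus the paper's $g_p:=g_0$ and $k\mapsto p-k$ bookkeeping), and your existence/uniqueness discussion is, if anything, slightly more explicit than the paper's.
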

\begin{proof}
By Lemma \ref{le:2} we will look for the solution of the problem  (\ref{eq:6}) in the form (\ref{eq:3}).
It is easy to observe that for each $ k,l=1,\dots,p $ and for $x\in \partial B$ we have
\begin{equation}
\label{eq:8}
  a_l(|e^\frac{(p-k)\pi i}{p}x|^2)=\sum_{j=0}^{p-1}e^\frac{2jl\pi i}{p}e^\frac{2j(p-k)\pi i}{p}|x|^{2j} =\begin{cases}
                                                                                                              p\ \textrm{for}\ k=l\\
                                                                                                              0\ \textrm{for}\ k\neq l
                                                                                                             \end{cases}.
\end{equation}
By Lemma \ref{le:1} we extend the function given by
 (\ref{eq:3}) to the set $ \bigcup_{k=0}^{p-1}e^\frac{k\pi i}{p}B $. Putting the boundary conditions from (\ref{eq:6}) and
 applying formula (\ref{eq:8}) we obtain
\begin{equation*}
g_k \left( e^\frac{(p-k)\pi i}{p} x \right) = \frac{1}{p} u \left( e^\frac{(p-k)\pi i}{p} x \right) 
= \frac{1}{p} f_{p-k}\left( e^\frac{(p-k)\pi i}{p} x \right)
\end{equation*}
for $x\in \partial B$ and $k=1,2,\dots,p$ with the notation $g_p:=g_0$.

Let us denote  
\begin{equation}
\label{eq:10}
\widehat{g}_k(x):=g_k(e^\frac{(p-k)\pi i}{p} x)\qquad \textrm{for}\quad x\in B, \quad k=1,2,\dots,p
\end{equation} 
and
\begin{equation}
\label{eq:11}
\widehat{f}_k(x):=f_k(e^\frac{k\pi i}{p} x)\qquad \textrm{for}\quad x\in B, \quad k=0,1,\dots,p-1.
\end{equation}
We see that the functions $\widehat{g}_k$ are harmonic on $ B$,
and the functions  $\widehat{f}_k$ are continuous on $\partial B$. In this way we obtain  $p$ Dirichlet problems for harmonic
functions $\widehat{g}_k$: 
$$
   \begin{cases}
   \Delta \widehat{g}_k(x)=0, & \text{} x\in B \\
   \widehat{g}_k(x)=\frac{1}{p}\widehat{f}_{p-k}(x), & \mbox{} x\in \partial B
   \end{cases}
   $$
for $k=1,2,\dots,p$.
 
The solutions of the above problems $\widehat{g}_k$ have the form of the Poisson integrals:
$$\widehat{g}_k(x)=\frac{1}{p\omega_n}\int\limits_{\partial B}\frac{1-|x|^2}{|x-\zeta|^n }\widehat{f}_{p-k}(\zeta)\,dS(\zeta)$$
for $x\in  B$ and $k=1,2,\dots,p$.
  
From (\ref{eq:10}) and (\ref{eq:11}) we have
$$
g_k\left( e^\frac{(p-k)\pi i}{p}x \right) =
\frac{1}{p\omega_n}\int\limits_{\partial B}\frac{1-|x|^2}{|x-\zeta|^n}f_{p-k} \left( e^\frac{(p-k)\pi i}{p}\zeta \right) dS(\zeta)
$$
for $x\in B$, $k=1,2,\dots,p$. Therefore
\begin{equation*}
g_k\left( x \right) =\frac{1}{p\omega_n}\int\limits_{\partial B}
\frac{1-e^\frac{2(k-p)\pi i}{p}|x|^2}{|e^\frac{(k-p)\pi i}{p}x-\zeta|^n }f_{p-k} \left( e^\frac{(p-k)\pi i}{p}\zeta \right) dS(\zeta)
\end{equation*}
for $x\in e^\frac{(p-k)\pi i}{p}   B$ and $k=1,2,\dots,p$. We extend the obtained functions  $g_k$ to the set
$\bigcup_{k=0}^{p-1}e^\frac{k\pi i}{p}B$ and then we put them to (\ref{eq:3}): 
\begin{eqnarray*}
u(x) & = & \frac{1}{p\omega_n}\sum_{k=1}^{p}\frac{1-|x|^{2p}}{1-e^\frac{2k\pi i}{p}|x|^2}\int\limits_{\partial B}
\frac{1-e^\frac{2(k-p)\pi i}{p}|x|^2}{|e^\frac{(k-p)\pi i}{p}x-\zeta|^n }f_{p-k} \left( e^\frac{(p-k)\pi i}{p}\zeta \right) dS(\zeta)\\
&=& \frac{1}{p\omega_n}\sum_{k=1}^{p}\int\limits_{\partial B}\frac{1-|x|^{2p}}{|e^\frac{(k-p)\pi i}{p}x-\zeta|^n }f_{p-k}
\left( e^\frac{(p-k)\pi i}{p}\zeta \right)dS(\zeta).
\end{eqnarray*}
Substituting $ m=p-k $ we get the desired formula (\ref{eq:7}). 
The uniqueness of the solution (\ref{eq:7}) follows from Lemma \ref{le:2}. 
\end{proof}

\section{Conclusions}
In this section we will give simple consequences of the main theorem. First we will generalise the Dirichlet problem (\ref{eq:6})
to $B(a,r)$, where the boundary conditions are put on the set
$ a+\bigcup_{k=0}^{p-1} e^\frac{k\pi i}{p}\partial B(0,r) $. Applying Theorem \ref{th:1} to the function $v(x)=u(a+rx)$ we obtain
\begin{Wn}
\label{th:2}
The Dirichlet problem
\begin{equation}
\label{eq:13}
   \begin{cases}
   \Delta^pu(x)=0, & \text{} x\in B(a,r) \\
   u(x)=f(x), & \mbox{} x\in a+\bigcup_{k=0}^{p-1} e^\frac{k\pi i}{p}\partial B(0,r)
   \end{cases}
   \end{equation}
has the unique solution given by
\begin{equation}
\label{eq:general}
 u(x)=\frac{1}{p\omega_n}\sum_{k=0}^{p-1}\int\limits_{\partial B}
 \frac{ r^{2p}-|x-a|^{2p}}{r^{2p-n} \left| e^\frac{-k\pi i}{p}(x-a)-r\zeta \right|^n }f \left(a+re^\frac{k\pi i}{p}\zeta \right)dS(\zeta).
\end{equation}
\end{Wn}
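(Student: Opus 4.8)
The plan is to reduce the problem on $B(a,r)$ to the unit–ball problem (\ref{eq:6}) already solved in Theorem \ref{th:1}, by means of the affine change of variables suggested before the statement. First I would set $v(x):=u(a+rx)$ and regard $\Phi(x):=a+rx$ as a holomorphic bijection of $\CC^n$ (with holomorphic inverse $\Phi^{-1}(y)=(y-a)/r$, since $a\in\RR^n$ and $r>0$ are real). This $\Phi$ carries $B$ onto $B(a,r)$, each $e^{\frac{k\pi i}{p}}\overline{B}$ onto $a+e^{\frac{k\pi i}{p}}\overline{B}(0,r)$, and in particular each $e^{\frac{k\pi i}{p}}\partial B$ onto $a+e^{\frac{k\pi i}{p}}\partial B(0,r)$. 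By the chain rule $\Delta_x^p v(x)=r^{2p}(\Delta^p u)(a+rx)$, so $v$ is polyharmonic of order $p$ on $B$ exactly when $u$ is polyharmonic on $B(a,r)$; moreover, because $\Phi$ is a biholomorphism, continuity of the holomorphic extension on the relevant closures (as guaranteed by Lemma \ref{le:1} and Remark \ref{re:1}) is transported from one problem to the other.

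Next I would record the transformed data. For $\zeta\in\partial B$ the point $e^{\frac{k\pi i}{p}}\zeta$ lies on $e^{\frac{k\pi i}{p}}\partial B$ and $\Phi\bigl(e^{\frac{k\pi i}{p}}\zeta\bigr)=a+re^{\frac{k\pi i}{p}}\zeta$, so the natural boundary data for $v$ are $f_k(y):=f(a+ry)$ for $y\in e^{\frac{k\pi i}{p}}\partial B$, which are continuous because $f$ is. Thus $v$ solves the unit–ball problem (\ref{eq:6}) with these $f_k$, and Theorem \ref{th:1} gives the unique solution
\begin{equation*}
v(x)=\frac{1}{p\omega_n}\sum_{k=0}^{p-1}\int\limits_{\partial B}\frac{1-|x|^{2p}}{|e^{\frac{-k\pi i}{p}}x-\zeta|^n}\,f\bigl(a+re^{\frac{k\pi i}{p}}\zeta\bigr)\,dS(\zeta).
\end{equation*}
Undoing the substitution through $u(y)=v\bigl((y-a)/r\bigr)$ then produces the claimed formula.

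The remaining work is the routine algebraic simplification of the kernel under $x=(y-a)/r$. Since $r>0$, the complex norm scales cleanly: $|x|^{2p}=|y-a|^{2p}/r^{2p}$ gives $1-|x|^{2p}=r^{-2p}\bigl(r^{2p}-|y-a|^{2p}\bigr)$, while $|e^{\frac{-k\pi i}{p}}x-\zeta|^n=r^{-n}\,|e^{\frac{-k\pi i}{p}}(y-a)-r\zeta|^n$. Combining the two factors gives the weight $\dfrac{r^{2p}-|y-a|^{2p}}{r^{2p-n}\,|e^{\frac{-k\pi i}{p}}(y-a)-r\zeta|^n}$, which is exactly (\ref{eq:general}) after renaming $y$ to $x$. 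Uniqueness is inherited immediately: any solution of (\ref{eq:13}) pulls back under $\Phi$ to a solution of (\ref{eq:6}), which is unique by Theorem \ref{th:1}, so its preimage is unique as well.

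The one point I expect to require genuine care — rather than being truly hard — is the scaling of the complex square–root norm $|\cdot|$, which the paper stresses is not a norm on $\CC^n$. Pulling the factor $1/r$ out of $|e^{\frac{-k\pi i}{p}}(y-a)-r\zeta|$ relies on the scalar being a \emph{positive real}, so that the principal square root of $r^2$ is simply $r$ and no branch ambiguity intervenes. This is unproblematic here, but because it is the only place where the non-norm character of $|\cdot|$ could cause trouble, I would state it explicitly instead of treating it as obvious.
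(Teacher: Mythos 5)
Your proposal is correct and follows exactly the paper's approach: the paper proves Corollary \ref{th:2} by the single remark ``Applying Theorem \ref{th:1} to the function $v(x)=u(a+rx)$ we obtain,'' and your argument is precisely that reduction, with the routine details (chain rule, transformed boundary data, kernel scaling) written out. Your explicit observation that pulling $1/r$ out of the complex square-root expression $|\cdot|$ is legitimate because $r>0$ is a sensible addition, since that is indeed the only point where the non-norm character of $|\cdot|$ could matter.
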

\bigskip

As a simple consequence of Corollary \ref{th:2} we conclude that
\begin{Wn}[The mean value theorem for polyharmonic functions]  
Let $G$ be an open subset of $ \RR^n $. If $ u $ is a polyharmonic function on $G$ of order $p$ then
for every $a\in\RR^n$ and $r>0$ such that $\overline{B(a,r)}\subset G$ we get
\begin{equation}
\label{eq:mean}
u(a)=\frac{1}{p \omega_n}\sum_{k=0}^{p-1}\int\limits_{\partial B}u(a+e^\frac{k\pi i}{p}r\zeta)\,dS(\zeta).
\end{equation}
\end{Wn}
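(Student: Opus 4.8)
The plan is to deduce this directly from Corollary \ref{th:2} by applying that result to the polyharmonic function $u$ itself on the ball $B(a,r)$, and then evaluating the resulting solution formula at the centre $x=a$.

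First I would fix $a$ and $r>0$ with $\overline{B(a,r)}\subset G$. Since $u$ is polyharmonic of order $p$ on $G$, its restriction to $B(a,r)$ is polyharmonic there, and by Lemma \ref{le:1} (applied to $v(y):=u(a+ry)$ on the unit ball $B$, exactly as in the derivation of Corollary \ref{th:2}) it extends holomorphically to the union of rotated balls $a+\bigcup_{k=0}^{p-1}e^{k\pi i/p}B(0,r)$, with the extension continuous on the corresponding closed set because $u$ is real-analytic, hence smooth, on the neighbourhood $G$ of $\overline{B(a,r)}$. Setting $f:=u$ on the rotated spheres $a+\bigcup_{k=0}^{p-1}e^{k\pi i/p}\partial B(0,r)$, the function $u$ is therefore a solution of the Dirichlet problem (\ref{eq:13}). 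By the uniqueness part of Corollary \ref{th:2}, $u$ must coincide with the Poisson-type integral (\ref{eq:general}) for this choice of $f$.

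Next I would substitute $x=a$ into (\ref{eq:general}). In the numerator the term $|x-a|^{2p}$ vanishes, leaving $r^{2p}$, while in the denominator one has $|e^{-k\pi i/p}(x-a)-r\zeta|^n=|-r\zeta|^n=r^n$ for every $\zeta\in\partial B$, since $\zeta$ is a real vector with $|\zeta|=1$ (so no branch-cut issue arises for the complexified norm). Thus the entire kernel collapses to $r^{2p}/(r^{2p-n}\cdot r^n)=1$, independently of $k$. Recalling that $f(a+re^{k\pi i/p}\zeta)=u(a+re^{k\pi i/p}\zeta)$ by construction then yields precisely (\ref{eq:mean}).

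The computation is routine once the set-up is in place, so the only point requiring genuine care is the preparatory step: verifying that the given $u$, together with the holomorphic extension furnished by Lemma \ref{le:1}, satisfies the hypotheses of Corollary \ref{th:2}, so that its uniqueness statement applies and identifies $u$ with (\ref{eq:general}). In particular one must ensure the extension is continuous up to the rotated boundary spheres, which is exactly where the assumption $\overline{B(a,r)}\subset G$ (rather than merely $B(a,r)\subset G$) is used.
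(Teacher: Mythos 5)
Your overall strategy --- apply Corollary \ref{th:2} to $u$ itself with boundary data $f=u$, invoke uniqueness to identify $u$ with the kernel formula (\ref{eq:general}), and evaluate at $x=a$, where the kernel collapses to $r^{2p}/(r^{2p-n}\cdot r^n)=1$ --- is exactly the paper's proof, and your evaluation at the centre is correct. However, the justification you give for the one step that needs genuine care is flawed. You apply Lemma \ref{le:1} to $v(y):=u(a+ry)$, which yields a holomorphic extension only to the \emph{open} rotated balls $a+\bigcup_{k=0}^{p-1}e^{k\pi i/p}B(0,r)$, and you then claim the extension is continuous up to the rotated boundary spheres ``because $u$ is real-analytic, hence smooth, on the neighbourhood $G$ of $\overline{B(a,r)}$.'' That inference does not work: for $k\neq 0$ the points of $a+e^{k\pi i/p}\partial B(0,r)$ are genuinely complex and do not lie in $G\subset\RR^n$, so smoothness (or even real-analyticity) of $u$ on the real set $G$ gives no control whatsoever on the boundary behaviour of the holomorphic extension at those points. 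The values of the sought solution on the rotated spheres come from the extension, not from $u$, so continuity there has to be proved for the extension itself.

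The correct way to exploit $\overline{B(a,r)}\subset G$ --- and what the paper does --- is to choose $\varepsilon>0$ such that $u$ is polyharmonic of order $p$ on $B(a,r+\varepsilon)$, and apply Lemma \ref{le:1} to $v(\zeta):=u(a+(r+\varepsilon)\zeta)$. This gives analyticity of the extension on the open rotated balls of radius $r+\varepsilon$, which contain the \emph{closed} rotated balls of radius $r$; analyticity on the larger open set in particular yields the continuity on $a+\bigcup_{k=0}^{p-1}e^{k\pi i/p}\overline{B(0,r)}$ needed to regard $u$ as a solution of the Dirichlet problem (\ref{eq:13}) and to invoke Corollary \ref{th:2}. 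With this repair (replacing your smoothness argument by the enlarged-radius application of Lemma \ref{le:1}), your proof coincides with the paper's.
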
 
\begin{proof}
First observe that there exist $\varepsilon>0$ such that $u$ is polyharmonic of order $p$ on $B(a,r+\varepsilon)$.
Applying Lemma \ref{le:1} to the function $v(\zeta):=u(a+(r+\varepsilon)\zeta)$ we conclude that
$u$ is analytic on $a+\bigcup_{k=0}^{p-1}e^{\frac{k\pi i}{p}}B(0,r)$ and continuous on
$a+\bigcup_{k=0}^{p-1}e^{\frac{k\pi i}{p}}\overline{B(0,r)}$. It means that $u$ is a solution of the Dirichlet problem (\ref{eq:13})
on $B(a,r)$ and we may use Corollary \ref{th:2}. So, by the formula (\ref{eq:general}) with $x=a$ and $f=u$ we obtain (\ref{eq:mean}).
\end{proof}

\begin{Uw}
 One can prove the same by using the Pizzetti type formula for polyharmonic functions (see \cite[Corollary 3 and Remark 6]{S-M}).
\end{Uw}

To show the next conclusion from the main theorem, we will consider the exterior Dirichlet problem, that is the problem where
the sought function is polyharmonic on the set $ (\RR^n\setminus B)$ and is also polyharmonic at infinity.
But first we recall two lemmas which we will use to solve the 
above mentioned problem. 
\begin{Le}[{\cite[Proposition 1.4]{A-C-L}}]
\label{le:3}
If $ \Delta^pu =0$ on $ B $, then the extended Kelvin transform of $u$ defined by
\begin{equation}
\label{eq:K}
K[u](x):=|x|^{2p-n}u\left( \frac{x}{|x|^2} \right)
\end{equation}
is a polyharmonic function of degree $ p $ on $(\RR^n\setminus B)\cup\{\infty\}$.
\end{Le}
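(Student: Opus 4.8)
The plan is to reduce this polyharmonic statement to the classical fact that the ordinary Kelvin transform preserves harmonicity, using the finite Almansi expansion as the bridge. First I would apply Proposition \ref{pr:1} to write $u(x)=\sum_{j=0}^{p-1}|x|^{2j}h_j(x)$ on $B$ with each $h_j\in\mathcal{A}_{\Delta}(B)$. Substituting the inversion $x^{*}:=x/|x|^2$ into the definition (\ref{eq:K}) and using $|x^{*}|=1/|x|$, one gets
\begin{equation*}
K[u](x)=|x|^{2p-n}\sum_{j=0}^{p-1}|x^{*}|^{2j}h_j(x^{*})=\sum_{j=0}^{p-1}|x|^{2p-n-2j}h_j(x^{*}).
\end{equation*}
The key is to split the exponent as $2p-n-2j=2(p-1-j)+(2-n)$, which lets me recognise the classical harmonic Kelvin transform $K_0[h_j](x):=|x|^{2-n}h_j(x/|x|^2)$ of each component, so that
\begin{equation*}
K[u](x)=\sum_{j=0}^{p-1}|x|^{2(p-1-j)}K_0[h_j](x).
\end{equation*}

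Next I would invoke the classical result (see \cite{A-B-R}) that, since each $h_j$ is harmonic on $B$, its ordinary Kelvin transform $K_0[h_j]$ is harmonic on the exterior $\RR^n\setminus\overline{B}$ and extends harmonically to the point at infinity. Reindexing by $m=p-1-j$ then presents $K[u]$ in the Almansi form $K[u](x)=\sum_{m=0}^{p-1}|x|^{2m}\widetilde{h}_m(x)$, with $\widetilde{h}_m:=K_0[h_{p-1-m}]$ harmonic on $(\RR^n\setminus\overline{B})\cup\{\infty\}$. To conclude I would use the converse (the \emph{moreover}) direction of Proposition \ref{pr:1}: a function of Almansi form with harmonic coefficients is polyharmonic of order $p$. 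Although that proposition is stated on $B$, its converse rests on the purely local fact that $|x|^{2m}\widetilde{h}_m$ is polyharmonic of order $m+1\le p$ whenever $\widetilde{h}_m$ is harmonic, so the computation $\Delta^p K[u]=0$ is valid verbatim on the exterior domain, where the origin causes no trouble.

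The main obstacle I anticipate is not the algebraic regrouping above but the bookkeeping at infinity: one must confirm that $K[u]$ is polyharmonic \emph{at} $\infty$ in the intended sense. This is most cleanly handled by noting that $K$ is, up to the standard power factor, an involution, so that polyharmonicity of $K[u]$ at $\infty$ is equivalent to polyharmonicity of $u$ at the origin, which holds by hypothesis; equivalently, the harmonicity of each $\widetilde{h}_m$ at $\infty$ already guarantees that the Almansi representation behaves correctly there. A secondary point to verify carefully is that the principal-branch conventions for $|x|^{2-n}$ and $|x|^{2m}$ implicit in $K_0$ are consistent with those in (\ref{eq:K}), so that the regrouping of powers is legitimate.
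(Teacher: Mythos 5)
Your proof is correct. Note first that the paper itself offers no proof of Lemma \ref{le:3}: it is quoted directly from Aronszajn--Creese--Lipkin \cite[Proposition 1.4]{A-C-L}, so there is no internal argument to compare against; what you have written is a self-contained proof of the cited fact, and it is essentially the standard one. Your chain of reductions --- Almansi expansion, factoring out $|x|^{2-n}$ to recognise the harmonic Kelvin transform $K_0[h_j]$, reindexing to get $K[u](x)=\sum_{m=0}^{p-1}|x|^{2m}K_0[h_{p-1-m}](x)$, and then the converse direction of the Almansi representation --- is sound, and your key justification is right: the identity $\Delta^{m+1}\bigl(|x|^{2m}g\bigr)=0$ for harmonic $g$ is a local, pointwise computation valid on any open subset of $\RR^n$, so invoking the ``moreover'' part of Proposition \ref{pr:1} on the exterior domain is legitimate even though that proposition is stated on $B$. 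Your handling of the point at infinity via the involution $K[K[u]]=u$ is exactly what the paper's definition demands (polyharmonicity at $\infty$ means removability of the singularity of the Kelvin transform at the origin), and since $u$ is polyharmonic in a full neighbourhood of $0$ by hypothesis, that singularity is removable. Two cosmetic remarks. First, the worry about principal branches is vacuous: the Kelvin transform here acts on real vectors $x\in\RR^n$, where $|x|$ is the ordinary Euclidean norm, so no branch conventions enter. Second, your argument naturally yields polyharmonicity on $(\RR^n\setminus\overline{B})\cup\{\infty\}$; the set $\RR^n\setminus B$ appearing in the statement contains $\partial B$, where $K[u]$ coincides with $u$ and need not even be defined for $u$ merely polyharmonic on the open ball --- but that imprecision sits in the quoted statement itself, not in your proof.
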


The inverse of the extended Kelvin transform is of the same form (\ref{eq:K}). This leads us to the following definition of
polyharmonicity at infinity.
\begin{Df}[{\cite[Remark 1.4]{A-C-L}}]
 If $E\subset \RR^n$ is compact and $u$ is polyharmonic on $\RR^n\setminus E$, then $u$ is \emph{polyharmonic at $\infty$} provided
 $K[u]$ has a removable singularity at the origin.
\end{Df}

\begin{Le}[Symmetry Lemma]
\label{le:4}
For every $ x,y \in \CC^n$, $|x|\neq 0$ and $|y|\neq 0$,
the following formula 
\begin{equation*}
\left| \frac{x}{|x|}-|x|y \right|=\left| \frac{y}{|y|}-|y|x \right|
\end{equation*}
is valid.
\end{Le}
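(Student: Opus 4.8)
The plan is to reduce the asserted equality of complex moduli to an algebraic identity for the associated symmetric bilinear form, and then to invoke the fact that the norm $|\cdot|$ is by definition a fixed (principal) branch of the square root. Throughout I write $z\cdot w:=\sum_{j=1}^n z_j w_j$ for the complex-bilinear extension of the real dot product, so that $|z|^2=z\cdot z$ for every $z\in\CC^n$. The hypotheses $|x|\neq 0$ and $|y|\neq 0$ guarantee that $|x|^2=x\cdot x$ and $|y|^2=y\cdot y$ are nonzero, so that the vectors $x/|x|$ and $y/|y|$ are well defined and satisfy $\frac{x}{|x|}\cdot\frac{x}{|x|}=1$ and $\frac{y}{|y|}\cdot\frac{y}{|y|}=1$.

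First I would compute the radicand of the left-hand side. Expanding the bilinear square and using $\frac{x}{|x|}\cdot(|x|y)=x\cdot y$ together with $(|x|y)\cdot(|x|y)=|x|^2|y|^2$, I obtain
\begin{equation*}
\Bigl(\frac{x}{|x|}-|x|y\Bigr)\cdot\Bigl(\frac{x}{|x|}-|x|y\Bigr)=1-2\,(x\cdot y)+|x|^2|y|^2 .
\end{equation*}
Next I would carry out the symmetric computation for the right-hand side, in which the roles of $x$ and $y$ are interchanged; since $x\cdot y=y\cdot x$ and $|x|^2|y|^2$ is symmetric in $x$ and $y$, this yields exactly the same value
\begin{equation*}
\Bigl(\frac{y}{|y|}-|y|x\Bigr)\cdot\Bigl(\frac{y}{|y|}-|y|x\Bigr)=1-2\,(x\cdot y)+|x|^2|y|^2 .
\end{equation*}

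Finally I would conclude as follows. The two vectors $\frac{x}{|x|}-|x|y$ and $\frac{y}{|y|}-|y|x$ have the same bilinear square, i.e.\ the same complex number under the radical defining $|\cdot|$. Since $|w|$ is by definition the principal square root of the single complex number $w\cdot w$, equal radicands force equal values, and therefore $\bigl|\frac{x}{|x|}-|x|y\bigr|=\bigl|\frac{y}{|y|}-|y|x\bigr|$, as claimed.

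I expect the only genuinely delicate point, and hence the ``main obstacle'', to be precisely this last step: because $|\cdot|$ is \emph{not} a norm and is complex valued, the coincidence of the two moduli cannot be deduced from any triangle-type inequality, but only from the equality of the radicands combined with the single-valuedness of the chosen branch of the square root. The two bilinear expansions themselves are entirely routine and require no estimates.
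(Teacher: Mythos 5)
Your proof is correct and is essentially the paper's own argument: the paper merely states that the proof is ``analogous as for $\RR^n$'' (citing Axler--Bourdon--Ramey, p.~10), and that real argument is exactly your bilinear expansion showing that both radicands equal $1-2\,(x\cdot y)+|x|^2|y|^2$. Your closing observation --- that equality of the moduli follows from equality of the radicands together with the single-valuedness of the principal branch, since $|\cdot|$ is complex valued and no norm properties are available --- is precisely the point that makes the real-case computation carry over to $\CC^n$.
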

\begin{proof}
The proof is analogous as for $ \RR^n $ (cf. \cite[p.~10]{A-B-R}).
\end{proof}

\begin{Wn}
\label{th:3}
The exterior Dirichlet problem
 \begin{equation}
\label{eq:14}
   \begin{cases}
   \Delta^pu(x)=0, & \text{} x\in (\RR^n\setminus B)\cup\{\infty\} \\
   u(x)=f(x), & \mbox{} x\in \bigcup_{k=0}^{p-1}e^\frac{k\pi i}{p}\partial B
   \end{cases}
   \end{equation}
has the unique solution given by
\begin{equation*}
u\left( x \right) =-\frac{1}{p\omega_n}\sum_{k=0}^{p-1}\int\limits_{\partial B}
\frac{1-|x|^{2p}}{ \left| e^\frac{-k\pi i}{p}\zeta-x\right| ^n }f (e^\frac{k\pi i}{p}\zeta)d\,S(\zeta).
\end{equation*}
\end{Wn}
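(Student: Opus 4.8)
The plan is to transfer the exterior problem (\ref{eq:14}) to an interior problem of the type (\ref{eq:6}) by means of the extended Kelvin transform of Lemma \ref{le:3}, to solve the latter with Theorem \ref{th:1}, and finally to transport the representation back, using the Symmetry Lemma (Lemma \ref{le:4}) to bring the kernel into the desired shape. I would therefore set $v:=K[u]$, with $K$ given by (\ref{eq:K}). Since the inverse of $K$ has the same form, Lemma \ref{le:3} applied to $K^{-1}=K$ shows that $v$ is polyharmonic of order $p$ on $B$; by Lemma \ref{le:1} it extends to $\bigcup_{k=0}^{p-1}e^{\frac{k\pi i}{p}}B$, so $v$ is an admissible object for Theorem \ref{th:1}. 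The whole proof then reduces to computing the boundary values of $v$ and undoing the transform.

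The second step is to read off the boundary data of $v$. For $\zeta\in\partial B$ one has $|e^{\frac{k\pi i}{p}}\zeta|^2=e^{\frac{2k\pi i}{p}}$, so the inversion $x\mapsto x/|x|^2$ carries the point $e^{\frac{k\pi i}{p}}\zeta$ of the $k$-th rotated sphere to $e^{\frac{-k\pi i}{p}}\zeta$, which again lies on $\bigcup_{k}e^{\frac{k\pi i}{p}}\partial B$, where $f$ is prescribed. Keeping track of the prefactor $|e^{\frac{k\pi i}{p}}\zeta|^{2p-n}$ expresses the boundary value of $v$ on the $k$-th rotated sphere through $f$, and Theorem \ref{th:1} then furnishes an explicit Poisson-type representation of $v$.

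The third step is to recover $u=K[v]$ and simplify. Substituting $y=x/|x|^2$ and using $|x/|x|^2|^{2p}=|x|^{-2p}$, the outer factor $|x|^{2p-n}$ combines with the powers generated inside the kernel so that all powers of $|x|$ cancel except for the one producing $1-|x|^{2p}$; the overall minus sign then comes from $|x|^{2p}-1=-(1-|x|^{2p})$. The essential point is the identity
$$\Bigl|e^{\frac{-k\pi i}{p}}\tfrac{x}{|x|^2}-\zeta\Bigr|=\frac{1}{|x|}\Bigl|e^{\frac{-k\pi i}{p}}\zeta-x\Bigr|\qquad(\zeta\in\partial B),$$
which is exactly Lemma \ref{le:4} after writing $e^{\frac{-k\pi i}{p}}x/|x|^2=w/|w|^2$ with $w=e^{\frac{k\pi i}{p}}x$ and cancelling the scalar $|e^{\frac{k\pi i}{p}}|_{\RR}$. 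A relabelling of the summation index together with the reflection $\zeta\mapsto-\zeta$ on $\partial B$ should then bring the sum into the stated form.

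Finally, uniqueness is immediate from the bijectivity of $K$: two solutions of (\ref{eq:14}) would have Kelvin transforms solving one and the same interior problem (\ref{eq:6}), which is uniquely solvable by Theorem \ref{th:1} (equivalently, by the uniqueness in Lemma \ref{le:2}). I expect the main difficulty to be the bookkeeping of the rotations in the second and third steps: one must be careful with the branch conventions for $|e^{i\varphi}|_{\RR}$ recorded in Section 2, since for $\frac{k\pi}{p}>\frac{\pi}{2}$ the principal square root contributes a sign, and it is precisely the interplay of these signs with the phase $e^{-\frac{nk\pi i}{p}}$ coming from $|e^{\frac{k\pi i}{p}}\zeta|^{2p-n}$, under the reflection $\zeta\mapsto-\zeta$ and the index change $k\mapsto p-k$, that must conspire to replace $f(e^{\frac{-k\pi i}{p}}\zeta)$ by $f(e^{\frac{k\pi i}{p}}\zeta)$ and to remove the spurious phases.
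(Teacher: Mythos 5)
Your plan uses the same ingredients as the paper's proof (the Kelvin transform of Lemma \ref{le:3}, Theorem \ref{th:1}, and the Symmetry Lemma \ref{le:4}), but runs the transform in the opposite direction: the paper takes the interior solution $v$ of (\ref{eq:6}) with the \emph{unmodified} data $f$ and declares $u=K[v]$ to be a solution of (\ref{eq:14}), whereas you transform the unknown, $v=K[u]$, and compute which boundary data $v$ must carry. Your direction is the logically sharper one, and your second step pinpoints exactly what the paper suppresses: since the inversion carries $e^{\frac{k\pi i}{p}}\zeta$ to $e^{\frac{-k\pi i}{p}}\zeta$, the data of $v$ on $e^{\frac{k\pi i}{p}}\partial B$ is $|e^{\frac{k\pi i}{p}}\zeta|^{2p-n}f(e^{\frac{-k\pi i}{p}}\zeta)$, i.e. $f$ at reflected points multiplied by the nontrivial phase $|e^{\frac{k\pi i}{p}}|_{\RR}^{2p-n}$. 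Your uniqueness argument via bijectivity of $K$ is also correct, and more explicit than the paper's ``The uniqueness is obvious.''

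The genuine gap is the step you defer: the hoped-for ``conspiracy'' by which these phases disappear after relabelling $k\mapsto p-k$ and reflecting $\zeta\mapsto-\zeta$. No such cancellation occurs, and the gap cannot be filled, because carrying your reduction to the end yields a formula that differs from the stated one by precisely those phases. Take $p=n=2$ and $f\equiv 1$, where every quantity is polynomial and no branch ambiguities arise. Your bookkeeping gives $v=K[u]$ the data $1$ on $\partial B$ and $|i\zeta|^{2}\cdot 1=-1$ on $i\partial B$; Theorem \ref{th:1} then gives $v(x)=|x|^2$, hence $u=K[v]\equiv 1$, which is indeed the solution of (\ref{eq:14}). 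By contrast, the displayed formula of Corollary \ref{th:3} evaluates in this case to $u(x)=|x|^2$ (the two integrals equal $-(1+|x|^2)$ and $1-|x|^2$), and on $i\partial B$ one has $|x|^2=-1\neq 1=f$: the formula violates the boundary condition on the rotated sphere. So the statement as printed is false, and the paper's own proof breaks at the sentence ``By Lemma \ref{le:3} the function $K[v](x)$ is a solution of (\ref{eq:14})'': Lemma \ref{le:3} gives polyharmonicity only, while the boundary values of $K[v]$ on $e^{\frac{k\pi i}{p}}\partial B$ are $|e^{\frac{k\pi i}{p}}|_{\RR}^{2p-n}f(e^{\frac{-k\pi i}{p}}\zeta)$, not $f$. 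What your (correct) reduction actually proves is that the unique solution of (\ref{eq:14}) is $K[\tilde v]$, where $\tilde v$ solves (\ref{eq:6}) with the transformed data $\tilde f(x)=|x|^{2p-n}f\bigl(x/|x|^2\bigr)$; the extra phases survive into the final representation and cannot be removed by any relabelling or reflection.
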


\begin{proof} 
By Theorem \ref{th:1} the solution of the interior Dirichlet problem corresponding to the exterior problem (\ref{eq:14}) has the form
$$v(x) =\frac{1}{p\omega_n}\sum_{k=0}^{p-1}\int\limits_{\partial B} \frac{1-|x|^{2p}}{|e^\frac{-k\pi i}{p}x-\zeta|^n }f (e^\frac{k\pi i}{p}\zeta) 
d\,S(\zeta)\qquad\textrm{for}\quad x\in B.$$
By Lemma \ref{le:3} the function $ K[v](x)$ is a solution of (\ref{eq:14}). So
\begin{eqnarray*}
u\left( x \right) & = & K[v](x)=\frac{1}{p\omega_n}\sum_{k=0}^{p-1}\int\limits_{\partial B}\frac{|x|^{2p}\left( 1-\left|\frac{x}{|x|^2}\right|^{2p}
\right) }{|x|^{n} \left|e^\frac{-k\pi i}{p}\frac{x}{|x|^2}-\zeta\right| ^n }f (e^\frac{k\pi i}{p}\zeta)  d\,S(\zeta) {}\\
& = & -\frac{1}{p\omega_n}\sum_{k=0}^{p-1}\int\limits_{\partial B} \frac{1-|x|^{2p}}{ \left|e^\frac{-k\pi i}{p}\right|_{\RR}^n
\left|\frac{x}{|x|}-e^\frac{k\pi i}{p}\zeta|x|\right|^n}f (e^\frac{k\pi i}{p}\zeta)\,dS(\zeta).
\end{eqnarray*}
By Lemma \ref{le:4} we get
\begin{eqnarray*}
u\left( x \right) & = & -\frac{1}{p\omega_n}\sum_{k=0}^{p-1}\int\limits_{\partial B}\frac{1-|x|^{2p}}{\left|e^\frac{-k\pi i}{p}
\right|_{\RR}^n \left| \frac{e^\frac{k\pi i}{p}\zeta}{|e^\frac{k\pi i}{p}\zeta|}-|e^\frac{k\pi i}{p}\zeta|x\right| ^n }f (e^\frac{k\pi i}{p}\zeta )\,  dS(\zeta){}\\
& = & -\frac{1}{p\omega_n}\sum_{k=0}^{p-1}\int\limits_{\partial B} \frac{1-|x|^{2p}}{\left|  e^\frac{-k\pi i}{p}\zeta- x\right| ^n }
f (e^\frac{k\pi i}{p}\zeta)\,dS(\zeta).
\end{eqnarray*}
The uniqueness is obvious.
\end{proof}

\section{Final remarks}
The existence and uniqueness of the solution of the Dirichlet problem (\ref{eq:1}) allows one to introduce the
so called spherical polyharmonics, which are  the most natural generalisation of spherical harmonics. Their theory will be studied in a subsequent paper.

The solution (\ref{eq:7}) of the Dirichlet problem (\ref{eq:1}) gives the connection between the Poisson formula (\cite[Theorem 1.17]{A-B-R})
for the harmonic functions
on the ball and the Cauchy-Hua formula ({\cite[Theorem 5.7]{M}}) for the holomorphic functions on the Lie ball. This connection will be also
investigated in a subsequent paper.

\section*{Acknowledgement}
The authors are grateful to Grzegorz {\L}ysik for suggesting the problem and for many valuable comments.


\begin{thebibliography}{30}
 \bibitem{A-C-L} N. Aronszajn, T.M. Creese, L.J. Lipkin, \emph{Polyharmonic {F}unctions}, Clarendon Press, Oxford, 1983.
 
\bibitem{A-B-R} S. Axler, P. Bourdon, W. Ramey, \emph{Harmonic Function Theory}, Springer-Verlag, New York, 1992.

\bibitem{F-M} K. Fujita, M. Morimoto, \emph{Holomorphic functions on the Lie ball and related topics},
in: Finite or infinite dimensional complex analysis and applications,  35--44, Kluwer Acad. Publ., Boston, 2004.

\bibitem{G-G-S} F. Gazzola, H.Ch. Grunau, G. Sweers, \emph{Polyharmonic Boundary Value Problems}, Springer-Verlag, New York, 2010.

\bibitem{H} L.K. Hua, \emph{Harmonic Analysis of Functions of Several Complex in Classical Domains}, AMS Bookstore, 1979.

\bibitem{L0} G. {\L}ysik, \emph{On the mean-value property for polyharmonic functions}, Acta Math. Hungar. 133 (2011), 133--139.

\bibitem{L1} G. {\L}ysik, \emph{Mean-value properties of real analytic functions}, Arch. Math. (Basel), 98 (2012), 61--70.

\bibitem{L} G. {\L}ysik, \emph{Higher order Pizzetti's formulas}, Rend. Lincei Mat. Appl. 27 (2016), 105--115.

\bibitem{S-M} S. Michalik, \emph{Summable solutions of some partial differential equations and generalised integral means},
J. Math. Anal. Appl. 444 (2016), 1242--1259.

\bibitem{M-T} L.M. Milne-Thomson, \emph{The Calculus of Finite Differences}, Macmillan and Co., London, 1933.

\bibitem{M2} M. Morimoto, \emph{Analytic functionals on the Lie sphere}, Tokyo J. Math., 3 (1980), 1--35.

\bibitem{M} M. Morimoto, \emph{Analytic Functionals on the Sphere}, Translation of Mathematical Monographs 178, AMS, 1998.

\bibitem{S} J. Siciak, \emph{Holomorphic continuation of harmonic functions}, Ann. Polon. Math. 29 (1974), 67--73.

 \end{thebibliography}
\end{document}